\newcommand{\Z}{\ensuremath{\mathbb{Z}}}
\newcommand{\C}{\ensuremath{\mathfrak{c}}}
\newcommand{\RP}{\ensuremath{\mathbb{R}\mathrm{P}}}
\newcommand{\Pe}{\ensuremath{\mathcal{P}}}
\newcommand{\q}{\ensuremath{\mathfrak{q}}}
\newcommand{\p}{\ensuremath{\mathfrak{p}}}
\newtheorem{definition}{Definition}[section]
\newtheorem{proposition}[definition]{Proposition}
\newtheorem{lemma}[definition]{Lemma}
\newtheorem{theorem}[definition]{Theorem}
\newtheorem{corollary}[definition]{Corollary}
\newtheorem{example}[definition]{Example}
\begin{document}

\thispagestyle{empty}

\begin{center}

{\Large Strong surjections from two-complexes with odd order \\ \vspace{2mm} top-cohomology onto the projective plane}

\vspace{10mm}

{\large Marcio C.\,Fenille, Daciberg L.\,Gon\c calves and  Oziride M.\,Neto}

\vspace{7mm}

\end{center}

\noindent{\bf Abstract:} Given a finite and connected two-dimensional $CW$-complex $K$ with fundamental group $\Pi$ and second integer cohomology group $H^2(K;\Z)$ finite of odd order, we prove that: (1) for each local integer coefficient system $\alpha:\Pi\to{\rm Aut}(\Z)$ over $K$, the corresponding twisted cohomology group $H^2(K;_{\alpha}\!\Z)$ is finite of odd order, we say order $\C^{\ast}(\alpha)$, and there exists a natural function -- which resemble that one defined by the twisted degree -- from the set $[K;\RP^2]_{\alpha}^{\ast}$ of the based homotopy classes of based maps inducing $\alpha$ on $\pi_1$ into $H^2(K;_{\alpha}\!\Z)$, which is a bijection; (2) the set $[K;\RP^2]_{\alpha}$ of the (free) homotopy classes of based maps inducing $\alpha$ on $\pi_1$ is finite of order $\C(\alpha)=(\C^{\ast}(\alpha)+1)/2$; (3) all but one of the homotopy classes $[f]\in[K;\RP^2]_{\alpha}$ are strongly surjective, and they are characterized by the non-nullity of the induced homomorphism $f^{\ast}:H^2(\RP^2;_{\varrho}\!\Z)\to H^2(K;_{\alpha}\!\Z)$, where $\varrho$ is the nontrivial local integer coefficient system over the projective plane. Also some calculations of the groups $H^2(K;_{\alpha}\!\Z)$ are provided for several two-complexes $K$ and actions $\alpha$, allowing to compare $H^2(K;\Z)$ and $H^2(K;_{\alpha}\!\Z)$ for nontrivial $\alpha$.

\vspace{5mm}

\noindent {\bf Key words:} Two-dimensional complexes, projective plane, homotopy classes, strong surjections, topological root theory, cohomology with local coefficients, $(2,1)$-presentations.

\vspace{3mm}

\noindent {\bf Mathematics Subject Classification:} 55M20 $\cdot$ 55N25 $\cdot$ 55Q05 $\cdot$ 57M05.

   
\section{Introduction and main theorem}\label{Section-Introduction}

\hspace{4mm} We study a problem related to both, homotopy theory and topological root theory.

From the viewpoint of homotopy theory, the problem can be considered as a type of Hopf-Whitney Classification Theorem \cite[Corollary 6.19, p.\,244]{Whitehead}, which  states that for a given finite and connected two-dimensional $CW$ complex $K$ (a two-complex, for short), there exists a bijection between the set $[K;S^2]$ of the free homotopy classes of maps from $K$ into the two-sphere $S^2$ and the integer cohomology group $H^2(K;\Z)$.

One could search for a result which describe the set 
$[K;\RP^2]$ in terms of the cohomology of $K$ with some local coefficient system. Certainly $[K;\RP^2]$ can not be identified  with $H^2(K;\Z)$ because the sets $[K;S^2]$ and 
$[K;\RP^2]$ do not have the same cardinality in general.  

Suppose that the two-complex $K$ is such that $[K;\RP^2]=0$. Then all maps $f:K\to\RP^2$ are homotopic to a constant map, and so all of them lifts, through the double covering map $\p:S^2\to\RP^2$, to maps $\tilde{f}:K\to S^2$ which are also homotopic to a constant map. Hence $[K;S^2]=0$, which forces $H^2(K;\Z)=0$. Therefore, $[K;\RP^2]=0$ implies $H^2(K;\Z)=0$. The opposite implication is not true; for instance, $K=S^1$. Theorem 1.1 of \cite{Fenille-Trivial-Class} gives four equivalent assumptions on the two-complex $K$ in order to the implication $H^2(K;\Z)=0\Rightarrow[K;\RP^2]=0$ to be true. Therefore, we have a (first) natural question: given a two-complex $K$ with $H^2(K;\Z)=0$, we ask how to describe the set $[K;\RP^2]$  when $K$ does not necessarily satisfy the assumptions of the theorem above referenced. In fact, in this work we consider the question in the more general setting where the group $H^2(K;\Z)$ is finite of odd order.

Another goal of this work is to study problems related to the topological root theory for maps $f:K \to \RP^2$. For this, we recall the following definition:

\begin{definition}{\rm 
For a {\it strongly surjective} map (or a {\it strong surjection}) we mean a map whose free homotopy class contains just surjective maps. Such a free homotopy class is also said to be {\it strongly surjective}.}
\end{definition}

By the Cellular Approximation Theorem, any map from a one-complex into a closed surface is not strongly surjective. Additionally, by the Universal Coefficient Theorem for Cohomology, a two-complex $K$ with $H^2(K;\Z)=0$ is (co)homological like a one-complex, since the homology groups $H_2(K;\Z)=0$ and $H_1(K;\Z)$ is torsion free. Therefore, it is natural to ask if there exists a two-complex $K$ with $H^2(K;\Z)=0$ and a strong surjection from $K$ into a closed surface. The first answer for this question was given in \cite{Fenille-Toro}, where the first author built a countable collection of two-complexes $K$ with $H^2(K;\Z)=0$ and, for each of them, a strong surjection onto the torus $S^1\times S^1$. By composing such map with the double covering map we get a strong surjection onto the Klein bottle.

Once an affirmative answer for the question was given, an ensuing problem is to know for what closed surfaces the question has an affirmative answer. The first attempt to answer the problem for the projective plane found a negative partial answer: Theorem 1.1 of \cite{Fenille-One-Relator-RP2} states that for a two-complex $K$ with just a two-cell and a map $f:K\to\RP^2$ inducing the homomorphism $\alpha$ on fundamental groups, if the twisted cohomology group $H^2(K;_{\alpha}\!\Z)=0$, then $f$ is not strongly surjective. But an affirmative answer was given in \cite{Fenille-Daciberg}, where the problem was approached for a particular countable collection of two-complexes, again with a single two-cell each, and it was described representatives for all free and based homotopy classes of maps from a two-complex of such collection into $\RP^2$. Moreover, it was provided a classification of the homotopy classes according to the property of to be strongly surjective. Therefore, article \cite{Fenille-Daciberg} moves on back to the viewpoint of homotopy theory.

In this paper we solve the following problem: given a two-complex $K$ whose second integer cohomology group  $H^2(K;\Z)$ is finite of odd order, describe  the set $[K;\RP^2]$ and provide a classification of the homotopy classes $[f]\in[K;\RP^2]$ according to the property of to be strongly surjective. We extends the results of \cite{Fenille-Daciberg} for all two-complexes $K$ for which $H^2(K;\Z)$ is finite of odd order. We count the free and based homotopy classes of maps from such a two-complex $K$ into $\RP^2$ and we prove that for each homomorphism $\alpha: \pi_1(K)\to\pi_1(\RP^2)$, there exists just one homotopy class of maps inducing $\alpha$ on fundamental groups which is not strongly surjective.

Throughout the text, we use the notations, simplifications and terminologies below.

We call a finite and connected two-dimensional $CW$ complex by a two-complex; we simplify $f$ is a continuous map by $f$ is a map; we consider the cyclic group $\Z_2=\{1,-1\}$ with its multiplicative structure; where appropriated, we identify an automorphism $\tau\in{\rm Aut}(\Z)$ with its value $\tau(1)$; the order of a group $G$ is indicated by $|G|$. 

Given a two-complex $K$, let $[K;\RP^2]^{\ast}_{\alpha}$ and $[K;\RP^2]_{\alpha}$ be the sets of the based and free homotopy classes $[f]^{\ast}$ and $[f]$, respectively, of based maps $f:K\to\RP^2$ inducing the homomorphism $\alpha$ on fundamental groups.

For each two-complex $K$ given {\it a priori}, its fundamental group will be denoted by $\Pi$. By the natural identification ${\rm Aut}(\Z)\approx\Z_2$, each homomorphism $\alpha\in\hom(\Pi;\Z_2)$ can be seen as a twisted integer coefficient system $\alpha:\Pi\to{\rm Aut}(\Z)$ over $K$ and we consider the corresponding twisted cohomology group $H^2(K;_{\alpha}\!\Z)$.

For the particular case $K=\RP^2$, we consider the only nontrivial homomorphism $\varrho:\pi_1(\RP^2)\to{\rm Aut}(\Z)$ and the corresponding twisted cohomology group $H^2(\RP^2;_{\varrho}\!\Z)$, which is well known to be the infinite cyclic group, so that we can choose appropriately a generator $\nu\in H^2(\RP^2;_{\varrho}\!\Z)$. Therefore, given  $[f]^{\ast}\in[K;\RP^2]_{\alpha}^{\ast}$ we can consider the induced homomorphism $f^{\ast}:H^2(\RP^2;_{\varrho}\!\Z)\to H^2(K;_{\alpha}\!\Z)$ which is defined by its value $f^{\ast}(\nu)$.

With these notations and terminologies we state our main theorem:

\begin{theorem}[Main Theorem]\label{Theorem-Main}
Let $K$ be a two-complex with fundamental group $\Pi$ and such that $H^2(K;\Z)$ is finite of odd order. Then, for each $\alpha\in\hom(\Pi;\Z_2)$, the twisted cohomology group $H^2(K;_{\alpha}\!\Z)$ is finite of odd order, we say order $\C^{\ast}(\alpha)$, and we have:
\begin{enumerate}
    \item[{\bf 1.}] The function $[f]^{\ast}\mapsto f^{\ast}(\nu)$ provides a bijection from $[K;\RP^2]_{\alpha}^{\ast}$ onto $H^2(K;_{\alpha}\!\Z)$, and so the set $[K;\RP^2]_{\alpha}^{\ast}$ is finite of cardinality $\C^{\ast}(\alpha)$.
    \item[{\bf 2.}] The set $[K;\RP^2]_{\alpha}$ is finite of cardinality $\C(\alpha)=(\C^{\ast}(\alpha)+1)/2$.
    \item[{\bf 3.}] All but one of the homotopy classes $[f]\in[K;\RP^2]_{\alpha}$ are strongly surjective, and they are characterized by the non-nullity of $f^{\ast}(\nu)$.
\end{enumerate}
\end{theorem}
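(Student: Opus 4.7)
The plan is to first establish the parenthetical claim that $H^2(K;_{\alpha}\!\Z)$ is finite of odd order, then prove Part 1 by a twisted Hopf--Whitney argument, and finally derive Parts 2 and 3 from Part 1 by identifying a $\Z_2$-action on $H^2(K;_{\alpha}\!\Z)$ with negation and by exhibiting an explicit non-surjective representative of the zero class. For the odd-order finiteness, I would view $H^2(K;_{\alpha}\!\Z)$ as the cokernel of the cellular coboundary $\delta^1_{\alpha}\colon\Z^{c_1}\to\Z^{c_2}$. Its matrix entries involve the signs $\alpha(g)\in\{\pm 1\}$, which collapse modulo $2$, so $\delta^1_{\alpha}\bmod 2$ coincides with $\delta^1_{0}\bmod 2$. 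The hypothesis that $H^2(K;\Z)$ is finite of odd order gives, via the universal coefficient theorem (which forces $H_2(K;\Z)=0$ and odd-order torsion in $H_1(K;\Z)$), the vanishing $H^2(K;\Z/2)=0$; hence $\delta^1_{0}\bmod 2$, and therefore $\delta^1_{\alpha}\bmod 2$, is surjective. A Smith normal form argument then yields that $\delta^1_{\alpha}$ has rank $c_2$ (giving finiteness) and that all its invariant factors are odd (giving odd order), so $|H^2(K;_{\alpha}\!\Z)|=:\C^{\ast}(\alpha)$ is odd.

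For Part 1, I would invoke the twisted Hopf--Whitney Classification Theorem: for a $2$-dimensional $CW$-complex $K$ and a space $Y$ such as $\RP^2$ with fixed $\alpha\in\hom(\Pi,\pi_1(Y))$, obstruction theory provides a bijection $[K;Y]^{\ast}_{\alpha}\approx H^2(K;_{\alpha}\pi_2(Y))$ realized by pullback of the twisted fundamental class. The setup is standard: build a based map on $K^{(1)}$ realizing $\alpha$ (possible through $\RP^1\subset\RP^2$); extend over each $2$-cell, noting that the obstruction in $\pi_1(\RP^2)$ vanishes because the attaching word is a relator of $\Pi$; and then identify the set of extensions modulo based homotopy with $H^2(K;_{\alpha}\!\Z)$ via the difference cochain, which is precisely the pullback $f^{\ast}(\nu)$ of the generator $\nu\in H^2(\RP^2;_{\varrho}\!\Z)$.

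For Part 2, free homotopy classes are orbits of based classes under the conjugation action of $\pi_1(\RP^2)=\Z_2$; under the bijection of Part 1 this translates to the negation action on $H^2(K;_{\alpha}\!\Z)$, because the nontrivial loop of $\RP^2$ acts on $\pi_2(\RP^2)=\Z$ by $-1$ (antipodal map on the universal cover $S^2$) and hence on $\nu$ by $-1$. Since $\C^{\ast}(\alpha)$ is odd, the only fixed point is $0$, giving $1+(\C^{\ast}(\alpha)-1)/2=\C(\alpha)$ orbits. For Part 3, the class corresponding to $0$ is represented by a map $g\colon K\to\RP^1\hookrightarrow\RP^2$ constructed by lifting $\alpha$ to $\tilde{\alpha}\colon\Pi\to\Z=\pi_1(\RP^1)$, where the lift exists because the reduction $H^1(K;\Z)\to H^1(K;\Z/2)$ is surjective under our hypothesis ($H_1(K;\Z)$ having no $2$-torsion); this $g$ misses the open $2$-cell of $\RP^2$, so $[g]$ is not strongly surjective. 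Conversely, if $[f]$ has $f^{\ast}(\nu)\neq 0$ but some free representative $g$ of $[f]$ misses a point $p\in\RP^2$, then $g$ factors through $\RP^2\setminus\{p\}\simeq\RP^1$, forcing $g^{\ast}(\nu)=0$; since free homotopy changes the pullback of $\nu$ by at most a sign (coming from $\varrho$ applied to the homotopy basepoint loop), one has $f^{\ast}(\nu)=\pm g^{\ast}(\nu)=0$, contradicting the hypothesis.

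The most delicate step is Part 1: the twisted obstruction-theoretic classification must be set up carefully at the based level so that the bijection is indeed realized by $[f]^{\ast}\mapsto f^{\ast}(\nu)$, and the $\pi_1(\RP^2)$-action on $H^2(K;_{\alpha}\!\Z)$ must be pinned down as negation through a consistent choice of generator $\nu$ and orientation conventions for the twisted coefficients; once Part 1 is established, Parts 2 and 3 follow by essentially algebraic arguments about a $\Z/2$-action on a finite abelian group of odd order and a factorization through a $1$-complex.
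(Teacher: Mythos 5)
Your overall architecture matches the paper's (odd-order finiteness via reduction mod $2$ of the cokernel presentation, a Sieradski-type based classification for Part 1, negation action for Part 2, factorization through $\RP^1$ for Part 3), but there is a genuine gap at exactly the step you flag as delicate in Part 1. You assert that the difference cochain identifying $[K;\RP^2]^{\ast}_{\alpha}$ with $H^2(K;_{\alpha}\!\Z)$ ``is precisely the pullback $f^{\ast}(\nu)$.'' This is false: the spherical modification realizing a twisted cochain $\gamma$ maps each pinched sphere through the double covering $\p:S^2\to\RP^2$, and $\p^{\ast}:H^2(\RP^2;_{\varrho}\!\Z)\to H^2(S^2;\Z)$ is multiplication by $2$, so the correct relation is $f^{\ast}(\nu)=f_0^{\ast}(\nu)+2\{\gamma\}$, not $f^{\ast}(\nu)=\{\gamma\}$. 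The obstruction-theoretic bijection is therefore $\Phi$ composed with division by $2$, and $\Phi$ itself is a bijection only because multiplication by $2$ is invertible on a finite group of odd order. Your Part 1 never invokes the odd-order hypothesis, which is a structural warning sign: the paper points out that for $K=\RP^2$ with the identity homomorphism the image of $[f]^{\ast}\mapsto f^{\ast}(\nu)$ is only the odd integers in $H^2(\RP^2;_{\varrho}\!\Z)\approx\Z$, so the claim that the pullback of $\nu$ realizes the obstruction bijection cannot be proved without using the hypothesis.

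A second, smaller omission in the same step: the difference-cochain bijection requires a basepoint class, and for the formula $f^{\ast}(\nu)=2\{\gamma\}$ one needs that basepoint to be a map $f_0$ with $(f_0)_{\#}=\alpha$ and $f_0^{\ast}=0$. You do construct such a map (the factorization through $\RP^1$, using that $\alpha$ lifts to $\Z$ because $H_1(K;\Z)$ has no $2$-torsion), but you deploy it only in Part 3; it must already enter Part 1. Once these two points are repaired, your Parts 2 and 3 go through essentially as in the paper: negation on a finite abelian group of odd order has $0$ as its unique fixed point, giving $(\C^{\ast}(\alpha)+1)/2$ orbits, and the ``misses a point $\Rightarrow$ factors through $\RP^1\Rightarrow$ pullback of $\nu$ vanishes up to sign'' argument is the same characterization of strong surjectivity the paper obtains via the $\pi_1(\RP^2)$-action.
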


Of course, Theorem \ref{Theorem-Main} includes the case in which $H^2(K;\Z)=0$ and $\alpha_0\in\hom(\Pi;\Z_2)$ is the trivial homomorphism; in this case, $\C^{\ast}(\alpha_0)=\C(\alpha_0)=1$ and the unique homotopy class $[f]\in[K;\RP^2]_{\alpha_0}$ is not strongly surjective (it is the class of the constant maps). 

In addition to extending the results of \cite{Fenille-Daciberg}, Theorem \ref{Theorem-Main} provides an extension and the converse of Theorem 1.1 of \cite{Fenille-One-Relator-RP2}.

Theorem \ref{Theorem-Main} is proved throughout the text. We highlight the steps given in each section. In Section \ref{Section-Twisted-Cohomology} we prove (Theorem \ref{Theorem-Odd-Order}) that for each $\alpha\in\hom(\Pi;\Z_2)$, the group $H^2(K;_{\alpha}\!\Z)$ is finite of odd order. In Section \ref{Section-f0} we prove (Theorem \ref{Theorem-Exists-f0}) that each $\alpha\in\hom(\Pi;\Z_2)$ can be realized as the homomorphism induced on fundamental groups by a non-surjective map $f_0:K\to\RP^2$. In Section \ref{Section-Bijections} we establish (Theorem \ref{Theorem-Bijection}), for each $\alpha\in\hom(\Pi;\Z_2)$, a certain function $\Phi:[K;\RP^2]_{\alpha}^{\ast}\to H^2(K;_{\alpha}\!\Z)$ is a bijection. In Section \ref{Section-Actions-Proof} we prove (Theorem \ref{Theorem-Action}) that a given based map $f:K\to\RP^2$ is strongly surjective if and only if its based homotopy class $[f]^{\ast}$ is not fixed by the natural action of $\pi_1(\RP^2)$, and we prove (Corollary \ref{Corollary-Action-Classes}) that such action corresponds, via the bijection $\Phi$, to the multiplication by $-1$ in $H^2(K;_{\alpha}\!\Z)$. In the end of Section \ref{Section-Actions-Proof} we present the proof of the Main Theorem itself.

In Section \ref{Section-Appendix} we give some results about the order of the twisted cohomology group $H^2(K;_{\alpha}\!\Z)$ when we vary both, the two-complexes $K$ with $H^2(K;\Z)=0$ and the possible homomorphisms $\alpha$ in $\hom(\Pi;\Z_2)$. The main result of the section is Proposition \ref{Proposition-All-Odd-Orders}. We highlight that such result provides examples of two-complexes $K$ with $H^2(K;\Z)=0$ for which there exists strong surjections from $K$ onto $\RP^2$.

We conclude this introduction by observing that the statement of item 1 of Theorem 1.2 (Main Theorem) does not hold if $H^2(K;_{\alpha}\!\Z)$ is not finite of odd order for some $\alpha$. The study of the questions considered in this work under the hypothesis that $H^2(K;_{\alpha}\!\Z)$ is not finite of odd order is a work in progress.


\section{The second twisted cohomology group}\label{Section-Twisted-Cohomology}

\hspace{4mm} Let $K$ be a two-complex with fundamental group $\Pi$. Each homomorphism $\alpha\in\hom(\Pi;\Z_2)$ can be seen as a local integer coefficient system $\alpha:\Pi\to{\rm Aut}(\Z)$ over $K$, and so it is defined the second twisted cohomology group $H^2(K;_{\alpha}\!\Z)$ with local coefficient system $\alpha$.

In \cite[Section 3]{Fenille-One-Relator-RP2} it is presented a way to compute this group, by using Fox-Reidemeister derivative. We present briefly the procedure here.

By \cite[Theorem 1.9]{Livro-Verde-Chapter-II}, the skeleton-pair $(K,K^1)$ is homotopy equivalent the that of the model two-complex $K_{\Pe}$ induced by a group presentation $\Pe=\langle x_1,\ldots,x_n\,|\,r_1,\ldots,r_m \rangle$ of the fundamental group $\Pi$. Let $\Omega:F(x_1,\ldots,x_n)\to\Pi$ be the corresponding epimorphism.

We define the following $m\times n$ integer matrices:
\begin{itemize}
    \item $\Delta=(\delta_{ij})$, in which $\delta_{ij}$ is the sum of the powers of the letter $x_j$ in the relator word $r_i$.
    \item $\Delta^{\!\alpha}=(\delta_{ij}^{\alpha})$, in which $\delta_{ij}^{\alpha}=\|\partial r_i/\partial x_j\|_{\alpha}$, where $\partial$ indicates the Fox-Reidemeister derivative and $\|\cdot\|_{\alpha}$ is the composition of the $\alpha$-augmentation function $\xi_{\alpha}:\Z\Pi\to\Z$ with the natural extension $\|\cdot\|:\Z F(x_1,\ldots,x_n)\to\Z\Pi$ of $\Omega$ on group rings.
\end{itemize}

Ir order to prove the main result of this section (Theorem \ref{Theorem-Odd-Order}), we use the following useful fact from \cite[Lemma 3.1]{Fenille-One-Relator-RP2}: for each $\alpha\in\hom(\Pi;\Z_2)$, one has \begin{equation}\label{Equation-Delta}
    \Delta^{\!\alpha}\equiv\Delta\,{\rm mod}\,2.
\end{equation}

In what follows, we consider the {\it cokernel} of a group homomorphism $h:G\to G'$ as being the quotient group ${\rm coker}(h)=G'/{\rm im}(h)$, as long  ${\rm im}(h)\subset G'$ is a normal subgroup.      

Let $\Lambda^{\!\alpha}:\Z^n\to\Z^m$ be the homomorphism whose matrix, relative to the canonical bases, is $\Delta^{\!\alpha}$. From \cite[Section 3]{Fenille-One-Relator-RP2}, we have
\begin{equation}\label{Equation-H2}
    H^2(K;_{\alpha}\!\Z)\approx{\rm coker}(\Lambda^{\!\alpha}).
\end{equation}

Of course, if $\alpha$ is the trivial homomorphism, then $\Delta^{\!\alpha}=\Delta$ and $H^2(K;_{\alpha}\!\Z)=H^2(K;\Z)$.

\begin{theorem}\label{Theorem-Odd-Order}
If $H^2(K;_{\beta}\!\Z)$ is finite of odd order for some $\beta\in\hom(\Pi;\Z)$, then so is $H^2(K;_{\alpha}\!\Z)$ for all $\alpha\in\hom(\Pi;\Z)$.
\end{theorem}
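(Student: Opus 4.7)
The plan is to combine the presentation (\ref{Equation-H2}) of $H^2(K;_{\alpha}\!\Z)$ as $\mathrm{coker}(\Lambda^{\!\alpha})$ with the mod-$2$ equality (\ref{Equation-Delta}), $\Delta^{\!\alpha}\equiv\Delta\,{\rm mod}\,2$, by passing everything to $\Z_2$-coefficients. The bridge is the elementary observation that a finitely generated abelian group $A$ is finite of odd order if and only if $A\otimes_{\Z}\Z_2=0$: writing $A\cong\Z^r\oplus T$ with $T$ finite, one has $A\otimes\Z_2\cong\Z_2^r\oplus(T/2T)$, which vanishes exactly when $r=0$ and $|T|$ is odd. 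Since $\mathrm{coker}(\Lambda^{\!\alpha})$ is a quotient of $\Z^m$, it is automatically finitely generated, so this criterion applies to $H^2(K;_{\alpha}\!\Z)$ for every $\alpha$.

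Next, I would apply right-exactness of $-\otimes\Z_2$ to the presentation $\Z^n\stackrel{\Lambda^{\!\alpha}}{\longrightarrow}\Z^m\to\mathrm{coker}(\Lambda^{\!\alpha})\to 0$ to obtain the natural isomorphism
$$H^2(K;_{\alpha}\!\Z)\otimes\Z_2\;\cong\;\mathrm{coker}\bigl(\Lambda^{\!\alpha}\otimes\Z_2\bigr).$$
The map $\Lambda^{\!\alpha}\otimes\Z_2:\Z_2^n\to\Z_2^m$ is represented in the canonical bases by the mod-$2$ reduction of the matrix $\Delta^{\!\alpha}$. By (\ref{Equation-Delta}), this reduction equals $\Delta\,{\rm mod}\,2$ for every $\alpha$, and is therefore independent of $\alpha$.

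Putting these two steps together, the property ``$H^2(K;_{\alpha}\!\Z)$ is finite of odd order'' translates into the single $\alpha$-free condition ``the linear map $\Z_2^n\to\Z_2^m$ with matrix $\Delta\,{\rm mod}\,2$ is surjective.'' If this condition holds for the given $\beta$, it automatically holds for every $\alpha\in\hom(\Pi;\Z_2)$, which proves the theorem. There is no real obstacle here: the two active ingredients, namely right-exactness of $-\otimes\Z_2$ and the explicit cokernel description of $H^2(K;_{\alpha}\!\Z)$, are both routine, and all the substantive content is already contained in the Fox-calculus lemma underlying (\ref{Equation-Delta}).
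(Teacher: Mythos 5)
Your proposal is correct and follows essentially the same route as the paper: the authors also reduce to the cokernel description (\ref{Equation-H2}), observe that a finitely generated abelian group is finite of odd order iff it dies after tensoring with $\Z_2$, use right-exactness of $-\otimes\Z_2$ to identify $H^2(K;_{\alpha}\!\Z)\otimes\Z_2$ with the cokernel of the mod-$2$ reduction of $\Lambda^{\!\alpha}$, and conclude from $\Delta^{\!\alpha}\equiv\Delta\,{\rm mod}\,2$ that this reduction is independent of $\alpha$ (their Lemma \ref{Lemma-Coker}, stated for a general modulus $p$ but applied with $p=2$). The only difference is cosmetic: you phrase the criterion as surjectivity of a single $\alpha$-free map $\Z_2^n\to\Z_2^m$, while the paper packages the same argument as a comparison lemma between two homomorphisms with congruent matrices.
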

\begin{proof}
Given $\alpha,\beta\in\hom(\Pi;\Z_2)$, let consider the corresponding matrices $\Delta^{\!\alpha},\Delta^{\!\beta}$ and homomorphisms $\Lambda^{\!\alpha},\Lambda^{\!\beta}:\Z^n\to\Z^m$ defined as above. We just need to prove that if ${\rm coker}(\Lambda^{\!\beta})$ is finite of odd order, then so is ${\rm coker}(\Lambda^{\!\alpha})$. Since $\Delta^{\!\alpha}\equiv\Delta^{\beta}\,{\rm mod}\,2$ (as an immediate consequence of  Equation (\ref{Equation-Delta})), the result follows from Lemma \ref{Lemma-Coker} below for $p=2$.
\end{proof}

\begin{lemma}\label{Lemma-Coker}
Let $\mathbf{a},\mathbf{b}:\Z^n\to\Z^m$ be homomorphisms with matrix $A$ and $B$, respectively, relative to the same pair of bases, and let $p\geq2$ be an integer. If $A\equiv B\,{\rm mod}\,p$ and ${\rm coker}(\mathbf{a})$ is finite of order coprime with $p$, then so is ${\rm coker}(\mathbf{b})$.
\end{lemma}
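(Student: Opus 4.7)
The plan is to reduce modulo $p$ by tensoring with $\Z/p\Z$ and exploit the right-exactness of the tensor product to relate the two cokernels. I would start from the defining exact sequences $\Z^n \xrightarrow{\mathbf{a}} \Z^m \to {\rm coker}(\mathbf{a}) \to 0$ and $\Z^n \xrightarrow{\mathbf{b}} \Z^m \to {\rm coker}(\mathbf{b}) \to 0$, and apply the functor $-\otimes_{\Z}\Z/p\Z$. By right-exactness this yields exact sequences $(\Z/p)^n \xrightarrow{\overline{\mathbf{a}}} (\Z/p)^m \to {\rm coker}(\mathbf{a}) \otimes \Z/p \to 0$ (and analogously for $\mathbf{b}$), where $\overline{\mathbf{a}}$ and $\overline{\mathbf{b}}$ denote the mod-$p$ reductions of $\mathbf{a}$ and $\mathbf{b}$. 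The hypothesis $A \equiv B \bmod p$ says exactly that $\overline{\mathbf{a}}=\overline{\mathbf{b}}$, so the two resulting cokernels coincide: ${\rm coker}(\mathbf{a})\otimes\Z/p \cong {\rm coker}(\mathbf{b})\otimes\Z/p$.

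Next, I would observe that if ${\rm coker}(\mathbf{a})$ is finite of order coprime to $p$, then multiplication by $p$ is an automorphism of it, hence ${\rm coker}(\mathbf{a})\otimes\Z/p = {\rm coker}(\mathbf{a})/p\cdot{\rm coker}(\mathbf{a}) = 0$. Transporting this across the isomorphism above gives ${\rm coker}(\mathbf{b})\otimes\Z/p = 0$, equivalently $p\cdot{\rm coker}(\mathbf{b}) = {\rm coker}(\mathbf{b})$.

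Finally, since ${\rm coker}(\mathbf{b})$ is a finitely generated abelian group (being a quotient of $\Z^m$), the structure theorem gives ${\rm coker}(\mathbf{b})\cong\Z^r\oplus T$ with $T$ finite. The equality $p\cdot{\rm coker}(\mathbf{b}) = {\rm coker}(\mathbf{b})$ forces both $(\Z/p)^r = 0$ and $T/pT = 0$; the first implies $r = 0$, so ${\rm coker}(\mathbf{b})$ is finite, and the second implies that the $p$-primary part of $T$ is trivial, so $|T|$ is coprime to $p$.

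I expect the argument to be essentially mechanical, because right-exactness does all the heavy lifting and the congruence hypothesis is used only to identify $\overline{\mathbf{a}}$ with $\overline{\mathbf{b}}$. The only step requiring a bit of attention is the last one: passing from the vanishing of $C\otimes\Z/p$ for a finitely generated abelian group $C$ to the conclusion that $C$ is finite of order coprime to $p$. This is where finite generation of ${\rm coker}(\mathbf{b})$ (inherited from $\Z^m$) is used in an essential way; without it, the conclusion would fail, as shown by $C=\Z[1/p]$.
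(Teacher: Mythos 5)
Your proposal is correct and follows essentially the same route as the paper: tensor the cokernel exact sequences with $\Z/p\Z$, use right-exactness and the congruence $A\equiv B\bmod p$ to identify the reductions $\overline{\mathbf{a}}=\overline{\mathbf{b}}$, and conclude via the criterion that a finitely generated abelian group $C$ is finite of order coprime to $p$ iff $C\otimes\Z/p\Z=0$. You are slightly more explicit than the paper in justifying that last equivalence via the structure theorem (the paper states it without comment), but the argument is the same.
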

\begin{proof}
Given a homomorphism $h:\Z^n\to\Z^m$, then ${\rm coker}(h)$ is finite of order coprime with $p$ if and only if ${\rm coker}(h)\otimes\Z_p=0$. Tensorizing the exact sequence $\Z^n\stackrel{h}{\rightarrow}\Z^m\stackrel{\pi}{\rightarrow}{\rm coker}(h)\to0$ with $\Z_p$ we obtain the exact sequence $$\Z^n\otimes\Z_p\stackrel{h\otimes1}{\longrightarrow}\Z^m\otimes\Z_p\stackrel{\pi\otimes1}{\longrightarrow}{\rm coker}(h)\otimes\Z_p\to0.$$ 

It follows that ${\rm coker}(h)$ is finite of order coprime with $p$ if and only if $h\otimes1$ is surjective.

Of course, fixed bases for $\Z^n$ and $\Z^m$ produce bases for $\Z^n\otimes\Z_p\approx\Z_p^n$ and $\Z^m\otimes\Z_p\approx\Z_p^m$ in such a way that if $C=(c_{ij})$ is the matrix of $h$ relative to the fixed bases for $\Z^n$ and $\Z^m$, then the matrix of $h\otimes 1$, relative to the corresponding bases for $\Z_p^n$ and $\Z_p^m$, is $\bar{C}=(\bar{c}_{ij})$, where the bar indicates the congruence class module $p$.

Now, consider the given homomorphisms  $\mathbf{a},\mathbf{b}:\Z^n\to\Z^m$. Since $A\equiv B\,{\rm mod}\,p$, we have $\bar{A}=\bar{B}$. It follows that $\mathbf{a}\otimes1=\mathbf{b}\otimes1$ and the result follows.
\end{proof}


\section{A special realization of homomorphisms}\label{Section-f0}

\hspace{4mm} As in the previous section, let $K$ be a two-complex whose fundamental group $\Pi$ is presented by $\Pe=\langle x_1,\ldots,x_n\,|\,r_1,\ldots,r_m \rangle$, so that $K$ is homotopy equivalent to the model two-complex $K_{\Pe}$. We have the following result:

\begin{lemma}\label{Lemma-hom-lift}
Suppose $H^2(K;\Z)$ is finite of odd order. Then $m\leq n$, $\hom(\Pi;\Z_2)\approx\Z_2^{n-m}$ and each $\alpha\in\hom(\Pi;\Z_2)$ lifts through the natural epimorphism $\q:\Z\to\Z_2$.
\end{lemma}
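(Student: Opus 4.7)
My plan is to derive all three assertions from the explicit cellular chain complex of the model two-complex $K_{\Pe}$. Since $K\simeq K_{\Pe}$ and the attaching maps of the $2$-cells are prescribed by the relators, the cellular boundary $\partial_2\colon\Z^m\to\Z^n$ is represented (with the usual column-vector conventions) by the transpose $\Delta^{T}$. Consequently $\Pi^{\mathrm{ab}}=H_1(K;\Z)\approx\mathrm{coker}(\Delta^{T}\colon\Z^m\to\Z^n)$, while formula (\ref{Equation-H2}) applied to the trivial $\alpha$ identifies $H^2(K;\Z)\approx\mathrm{coker}(\Delta\colon\Z^n\to\Z^m)$. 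Since $\mathrm{coker}(\Delta)$ is finite, $\Delta$ must have full row rank $m$, so $m\leq n$.

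For the middle claim, I would apply $\hom(-;\Z_2)$ to the presentation $\Z^m\xrightarrow{\Delta^{T}}\Z^n\to\Pi^{\mathrm{ab}}\to 0$ to obtain the exact sequence
$$0\to\hom(\Pi;\Z_2)\to\Z_2^n\xrightarrow{\bar{\Delta}}\Z_2^m,$$
where $\bar{\Delta}$ denotes reduction of $\Delta$ modulo $2$. The argument inside the proof of Lemma \ref{Lemma-Coker} shows that $\mathrm{coker}(\Delta)$ has finite order coprime to $2$ if and only if $\bar{\Delta}$ is surjective. Therefore $|\hom(\Pi;\Z_2)|=|\ker\bar{\Delta}|=2^{n-m}$, and being a subgroup of $\Z_2^n$ it is a $\Z_2$-vector space of dimension $n-m$, hence isomorphic to $\Z_2^{n-m}$.

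The lifting assertion is the substantive step. The same functorial computation with $\Z$-coefficients identifies $\hom(\Pi;\Z)\approx\ker(\Delta\colon\Z^n\to\Z^m)$, and under these identifications $\q_{*}\colon\hom(\Pi;\Z)\to\hom(\Pi;\Z_2)$ is componentwise reduction modulo $2$ restricted to $\ker\Delta$. So, given $\alpha\in\ker\bar{\Delta}$, I pick any $a\in\Z^n$ with $a\equiv\alpha\pmod 2$; then $\Delta a\equiv 0\pmod 2$, so $\Delta a=2e$ for some $e\in\Z^m$. A lift of $\alpha$ to $\hom(\Pi;\Z)$ must have the form $a+2c$ with $\Delta(a+2c)=0$, i.e.\ $\Delta c=-e$ in $\Z^n$. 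The existence of such $c$ is precisely the vanishing of the class $[e]$ in $\mathrm{coker}(\Delta)=H^2(K;\Z)$; but $2[e]=[\Delta a]=0$ already, and since $H^2(K;\Z)$ has odd order, multiplication by $2$ is an automorphism of it, forcing $[e]=0$ and producing the required $c$.

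I expect the main obstacle to be conceptual rather than technical: one must recognise that the obstruction to lifting $\alpha$ naturally lives in $H^2(K;\Z)$ and is a priori $2$-torsion, whereupon the odd-order hypothesis annihilates it. The first two assertions are routine bookkeeping with the presentation matrix and formula (\ref{Equation-H2}).
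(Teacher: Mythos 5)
Your proof is correct, but it follows a genuinely different route from the paper's. The paper gets $m\leq n$ the same way you do (finiteness of $\mathrm{coker}(\Delta)$), but for the other two claims it invokes the Universal Coefficient Theorem to write $H_1(K)\approx\Z^k\oplus\mathcal{T}$ with $\mathcal{T}\approx H^2(K;\Z)$ of odd order, computes $k=n-m$ by a mod-$2$ Euler characteristic count, and then both $\hom(\Pi;\Z_2)\approx\Z_2^k$ and the lifting become immediate: any homomorphism $\Pi^{ab}\to\Z_2$ kills the odd-order torsion $\mathcal{T}$, hence factors through the free summand $\Z^k$, from which it lifts to $\Z$ for free. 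You instead stay entirely inside the presentation matrix: identifying $\hom(\Pi;\Z_2)$ with $\ker\bar{\Delta}$ and using surjectivity of $\bar{\Delta}$ (extracted from the proof of Lemma \ref{Lemma-Coker}) to count $2^{n-m}$ elements, and then locating the obstruction to lifting as a class $[e]\in\mathrm{coker}(\Delta)=H^2(K;\Z)$ satisfying $2[e]=0$, which the odd-order hypothesis annihilates. The paper's argument is more structural and makes the role of the torsion subgroup transparent; yours is more self-contained (no UCT, no Euler characteristic), meshes directly with the matrix formalism of Section \ref{Section-Twisted-Cohomology}, and isolates the lifting obstruction as a $2$-torsion element of $H^2(K;\Z)$ --- which is perhaps the cleaner way to see exactly where the odd-order hypothesis is used. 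Both proofs are complete.
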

\begin{proof}
That $m\leq n$ follows from the characterization of $H^2(K;\Z)$ given in Equation (\ref{Equation-H2}).

By the Universal Coefficient Theorem for Cohomology \cite[Theorem 3.2, p.\,195]{Hatcher}, since $H^2(K;\Z)$ is finite, we have $H_2(K)=0$ and $H_1(K)\approx\Z^k\oplus\mathcal{T}$, for some integer $k$, in which the torsion group $\mathcal{T}\approx
H^2(K;\Z)$. This implies that $H_1(K;\Z_2)$ has rank $k$ over $\Z_2$.

The Euler characteristic of $K$ is $1+m-n$ (by the cellular decomposition of $K_{\Pe}$) and we have $H_0(K;\Z_2)\approx\Z_2$ and $H_2(K;\Z_2)=0$, since $H_2(K)=0$ and the torsion subgroup of $H_1(K)$ has odd order. It follows that $1+m-n=1-{\rm rank}_{\Z_2}\!(H_1(K;\Z_2))+0$, which implies that  $H_1(K;\Z_2)$ has rank $n-m$ over $\Z_2$. Therefore $k=n-m$.

Since $\Z_2$ is abelian, the composition with the abelianization homomorphism $\rho:\Pi\to\Pi^{ab}$ provides an isomorphism $\hom(\Pi^{ab};\Z_2)\approx\hom(\Pi;\Z_2)$. Since $\Pi^{ab}\approx H_1(K)$ and $\mathcal{T}$ is finite of odd order, which forces each homomorphism from $\Pi^{ab}$ into $\Z_2$ to map $\mathcal{T}$ to $1$, we have $$\hom(\Pi;\Z_2)\approx\hom(\Z^k\oplus\mathcal{T};\Z_2)\approx\hom(\Z^k;\Z_2)\approx\Z_2^k.$$

To complete the proof, given $\alpha\in\hom(\Pi;\Z_2)$, let take $\bar{\alpha}\in(\Pi^{ab};\Z_2)$ such that $\bar{\alpha}\circ\rho=\alpha$. Since $\bar{\alpha}$ maps
$\mathcal{T}$ to $1$ and $\q:\Z\to\Z_2$ is surjective, there exists a homomorphism $\hat{\alpha}:\Pi^{ab}\to\Z$ such that
$\q\circ\hat{\alpha}=\bar{\alpha}$. Therefore, $\widetilde{\alpha}=\hat{\alpha}\circ\rho:\Pi\to\Z$ is a lifting of $\alpha$ through $\q$.
\end{proof}

\begin{theorem}\label{Theorem-Exists-f0}
Suppose $H^2(K;\Z)$ is finite of odd order. Given $\alpha\in\hom(\Pi;\Z_2)$, there exists a non-surjective map $f_0:K\to\RP^2$ inducing $\alpha$ on fundamental groups.
\end{theorem}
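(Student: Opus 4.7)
The plan is to factor $f_0$ through a circle. Since $\RP^2$ minus the interior of its top cell deformation retracts onto its $1$-skeleton, any non-surjective map $K\to\RP^2$ is, up to homotopy, the composition of a map $K\to S^1$ with the inclusion $i\colon S^1\hookrightarrow\RP^2$ of the $1$-skeleton. This inclusion realizes $i_{\ast}\colon\pi_1(S^1)=\Z\to\pi_1(\RP^2)=\Z_2$ as precisely the reduction mod $2$ epimorphism $\q$. Hence, to build $f_0$ inducing $\alpha$, it suffices to construct a map $g\colon K\to S^1$ whose induced homomorphism $g_{\ast}\colon\Pi\to\Z$ satisfies $\q\circ g_{\ast}=\alpha$. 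A lift $\widetilde{\alpha}\colon\Pi\to\Z$ of $\alpha$ through $\q$ is guaranteed by Lemma \ref{Lemma-hom-lift}, so I aim for $g_{\ast}=\widetilde{\alpha}$.

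For the construction of $g$, I replace $K$ by the model complex $K_{\Pe}$ (homotopy equivalent to $K$) and proceed cellularly. On the $1$-skeleton, I send the loop corresponding to each generator $x_j$ to a loop in $S^1$ of degree $\widetilde{\alpha}(x_j)$; this defines $g$ on $K_{\Pe}^1$, and on free fundamental groups it induces $\widetilde{\alpha}\circ\Omega$. To extend over each $2$-cell, I need its attaching loop, labeled by the relator $r_i$, to become null-homotopic in $S^1$; but the class of this loop in $\pi_1(S^1)=\Z$ is $\widetilde{\alpha}(\Omega(r_i))=\widetilde{\alpha}(1)=0$. Therefore $g$ extends to a map on all of $K_{\Pe}$, and the resulting $g_{\ast}$ agrees with $\widetilde{\alpha}$ on each generator of $\Pi$, hence everywhere.

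Finally, the composition $f_0=i\circ g$ factors through $S^1$, misses the center of the $2$-cell of $\RP^2$, and hence is non-surjective; and $(f_0)_{\ast}=i_{\ast}\circ g_{\ast}=\q\circ\widetilde{\alpha}=\alpha$ by construction. I do not expect a serious obstacle here: the only non-routine ingredient is the existence of the lift $\widetilde{\alpha}$, which is exactly what Lemma \ref{Lemma-hom-lift} provides, while the realization of an integer $\pi_1$-homomorphism by a map into $S^1=K(\Z,1)$ is standard obstruction theory (no obstruction in degree $\geq 2$ on a $2$-complex, since $\pi_n(S^1)=0$ for $n\geq 2$).
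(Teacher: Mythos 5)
Your proof is correct and follows essentially the same route as the paper: lift $\alpha$ to $\widetilde{\alpha}\colon\Pi\to\Z$ via Lemma \ref{Lemma-hom-lift}, realize $\widetilde{\alpha}$ by a map $K\to S^1$, and compose with the skeleton inclusion $S^1\hookrightarrow\RP^2$. The only difference is that the paper cites an external lemma for the existence of the map to $S^1$ realizing $\widetilde{\alpha}$, whereas you carry out that standard cellular construction explicitly; both are fine.
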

\begin{proof}
By Lemma \ref{Lemma-hom-lift}, a given homomorphism $\alpha:\Pi\to\Z_2$ lifts to $\widetilde{\alpha}:\Pi\to\Z$ through $\q:\Z\to\Z_2$. By \cite[Lemma 2.5]{Fenille-Dide}, there exists a cellular map $f_0^1:K\to S^1$ such that $\widetilde{\alpha}=(f_0^1)_{\#}:\Pi\to\Z$. Consider the map $f_0:K\to\RP^2$ given by the composition $f_0=l\circ f_0^1$, in which $l:S^1\hookrightarrow\RP^2$ is the skeleton inclusion. Then $(f_0)_{\#}=\alpha$.
\end{proof}


\section{Bijections with based homotopy classes}\label{Section-Bijections}

\hspace{4mm} Let $K$ be a two-complex with fundamental group $\Pi$ and such that $H^2(K;\Z)$ is finite of odd order. Take, once and for all (for this section), a homomorphism $\alpha\in\hom(\Pi;\Z_2)$.

We consider the non-trivial homomorphism $\varrho:\pi_1(\RP^2)\to{\rm Aut}(\Z)$ and the  twisted 
cohomology group $H^2(\RP^2;_{\varrho}\!\Z)$,  which is well known to be the infinite cyclic group.

Each map $f:K\to\RP^2$ inducing $\alpha$ on the fundamental groups induces a homomorphism $$f^{\ast}:H^2(\RP^2;_{\varrho}\!\Z)\to H^2(K;_{\alpha}\!\Z).$$

Let us consider, in particular, a non-surjective map $f_0:K\to\RP^2$ inducing $\alpha$ on fundamental groups, so that $f_0^{\ast}=0$. Such a map exists, by Theorem \ref{Theorem-Exists-f0}.

Following \cite[Subsections 3.2 -- 4.2]{Livro-Verde-Chapter-II}, each twisted cohomology class $\{\gamma\}\in H^2(K;_{\alpha}\!\Z)$ is represented\footnote{In \cite[Theorem 4.12]{Livro-Verde-Chapter-II}, it is used the notation $H^2(\widetilde{K};_{\alpha}\!\Z)$ to indicate the equivariant cohomology of the universal covering space $\widetilde{K}$ of $K$, what in turn corresponds to $H^2(K;_{\alpha}\!\Z)$ in our notation.} by a $\Z\Pi$-module homomorphism (a twisted cochain) $$\gamma:C^2(\widetilde{K})\,\to\,  _{\alpha}\pi_2(\RP^2)\approx\,_{\alpha}\Z.$$

Given such a cochain $\gamma$, we follow \cite[Section 3.2]{Livro-Verde-Chapter-II} to defined the so-called {\it spherical modification} of $f_0$ by $\gamma$ as being the map $f_0^{\gamma}:K\to\RP^2$ which acts just like $f_0$, except that it preliminarily pinches a circle in each two-cell $c^2$ on $K$ to create a two-sphere $S^2$ based at the boundary of $c^2$, which it maps via a suitable based representative $S^2\to S^2\stackrel{\p\,\,}{\to}\RP^2$ of $\gamma(\tilde{c}^2)\in \,_{\alpha}\pi_2(\RP^2)\equiv [S^2;S^2]$, where $\tilde{c}^2$ is a preferred two-cell above $c^2$. 

The map $f_0^{\gamma}$ induces the homomorphism $\alpha$ on fundamental groups and its based homotopy class is determined by the twisted cochain $\gamma$. By \cite[Lemma 3.5]{Livro-Verde-Chapter-II}, the corresponding equivariant maps induced by $f_0$ and $f_0^{\gamma}$ coincide through dimension $1$ and their difference in dimension $2$ is the twisted cochain $\gamma$.

Since $f_0^{\ast}=0$ and, moreover, the covering map $\p:S^2\to\RP^2$ induces multiplication by $2$ on twisted cohomology groups, it follows that the induced homomorphism $$(f_0^{\gamma})^{\ast}:H^2(\RP^2;_{\varrho}\!\Z)\to H^2(K;_{\alpha}\!\Z) \quad\textrm{is defined by}\quad (f_0^{\gamma})^{\ast}(\nu)=2\{\gamma\},$$ where $\nu$ is a preferred generator of the infinite cyclic group $H^2(\RP^2;_{\varrho}\!\Z)$.

By \cite[Theorem 4.12]{Livro-Verde-Chapter-II}, we have a bijection $$f_0^{(\,\cdot\,)}:H^2(K;_{\alpha}\!\Z)\to[K;\RP^2]_{\alpha}^{\ast} \quad{\rm given \ by}\quad \{\gamma\}\mapsto[f_0^{\gamma}]^{\ast}.$$

Since the group $H^2(K;_{\alpha}\!\Z)$ is finite of odd order, the multiplication by $2$ on this group provides an isomorphism. Therefore, the function $$\Phi:[K;\RP^2]_{\alpha}^{\ast}\to H^2(K;_{\alpha}\!\Z) \quad\textrm{given by}\quad [f]^{\ast}\mapsto f^{\ast}(\nu)$$ is a bijection making commutative the diagram below, in which the ascending vertical arrow is the isomorphism given by the multiplication by $2$ and the descending one is its inverse:

\begin{center}
\begin{tabular}{c}\xymatrix{ & H^2(K;_{\alpha}\!\Z) \ar@/^0.15cm/[d]^-{\frac{1}{2}} \\ [K;\RP^2]^{\ast}_{\alpha} \ar[ru]^-{\Phi}  & H^2(K;_{\alpha}\!\Z) \ar@/^0.15cm/[u]^-{2} \ar[l]^-{f_0^{(\,\cdot\,)}} }
\end{tabular}
\end{center}

We have proved the following theorem:

\begin{theorem}\label{Theorem-Bijection}
Let $K$ be a two-complex with fundamental group $\Pi$ and such that $H^2(K;\Z)$ is finite of odd order. For each homomorphism $\alpha\in\hom(\Pi;\Z_2)$, the function $\Phi:[f]^{\ast}\mapsto f^{\ast}(\nu)$ is a bijection from $[K;\RP^2]_{\alpha}^{\ast}$ onto $H^2(K;_{\alpha}\!\Z)$.
\end{theorem}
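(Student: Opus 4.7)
The plan is to exhibit $\Phi$ as the composition of two bijections: the classical ``spherical modification'' parameterisation of $[K;\RP^2]^{\ast}_{\alpha}$ by twisted $2$-cocycles, followed by multiplication by $2$ on $H^2(K;_{\alpha}\!\Z)$. The odd-order hypothesis will enter only at the very end, to upgrade the latter from an endomorphism into an automorphism.

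First I would fix a convenient ``base point'' in $[K;\RP^2]^{\ast}_{\alpha}$: by Theorem \ref{Theorem-Exists-f0} there exists a non-surjective based map $f_0:K\to\RP^2$ with $(f_0)_{\#}=\alpha$. Because $f_0$ factors through the $1$-skeleton $\RP^1\subset\RP^2$ and $H^2$ of a one-complex with any local coefficients vanishes, the induced homomorphism $f_0^{\ast}:H^2(\RP^2;_{\varrho}\!\Z)\to H^2(K;_{\alpha}\!\Z)$ is zero, so $\Phi([f_0]^{\ast})=0$. This ensures that the computation below really is one about cohomology classes rather than coset representatives.

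Second, I would invoke the equivariant obstruction theory developed in \cite[Subsections 3.2--4.2]{Livro-Verde-Chapter-II}: each class $\{\gamma\}\in H^2(K;_{\alpha}\!\Z)$ is represented by a $\Z\Pi$-module cochain $\gamma:C^2(\widetilde{K})\to{}_{\alpha}\pi_2(\RP^2)\approx{}_{\alpha}\Z$, and one defines the spherical modification $f_0^{\gamma}$ of $f_0$ by pinching a sphere in each $2$-cell and mapping it via $S^2\xrightarrow{\gamma(\widetilde{c}^2)}S^2\xrightarrow{\p}\RP^2$. By \cite[Theorem 4.12]{Livro-Verde-Chapter-II}, the assignment $\{\gamma\}\mapsto[f_0^{\gamma}]^{\ast}$ is a bijection $f_0^{(\,\cdot\,)}:H^2(K;_{\alpha}\!\Z)\to[K;\RP^2]^{\ast}_{\alpha}$. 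So the target set is already indexed by the correct cohomology group, and only the identification with $\Phi$ remains.

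Third, I would compute $(f_0^{\gamma})^{\ast}(\nu)$. By \cite[Lemma 3.5]{Livro-Verde-Chapter-II} the difference cochain between the equivariant lifts of $f_0^{\gamma}$ and $f_0$ in dimension $2$ is precisely $\gamma$, so up to the contribution of $f_0$ (which is $0$) the class $(f_0^{\gamma})^{\ast}(\nu)$ is determined by how $\nu$ pulls back along the spherical modification. Since the sphere factor is mapped to $\RP^2$ through the double cover $\p$, and $\p^{\ast}:H^2(\RP^2;_{\varrho}\!\Z)\to H^2(S^2;\Z)$ is multiplication by $2$ (as $\p$ is a $2$-sheeted covering whose orientation character is $\varrho$), one obtains $(f_0^{\gamma})^{\ast}(\nu)=2\{\gamma\}$. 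Finally, Theorem \ref{Theorem-Odd-Order} gives that $H^2(K;_{\alpha}\!\Z)$ is finite of odd order, so multiplication by $2$ is an automorphism of this group; hence $\Phi$ equals $2\circ(f_0^{(\,\cdot\,)})^{-1}$ and is a bijection.

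The main obstacle is the factor-of-$2$ computation in the third step: it requires reconciling the ``upstairs'' identification ${}_{\alpha}\pi_2(\RP^2)\approx{}_{\alpha}\Z$, through which the cochain $\gamma$ encodes $f_0^{\gamma}$, with the ``downstairs'' generator $\nu\in H^2(\RP^2;_{\varrho}\!\Z)$ being pulled back. Once this is in place, the odd-order hypothesis is exactly what converts the resulting $\times 2$ map into an isomorphism, and everything else is packaging.
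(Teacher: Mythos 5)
Your proposal is correct and follows essentially the same route as the paper: fix the non-surjective base map $f_0$ with $f_0^{\ast}=0$ from Theorem \ref{Theorem-Exists-f0}, use the spherical-modification bijection $\{\gamma\}\mapsto[f_0^{\gamma}]^{\ast}$ of \cite[Theorem 4.12]{Livro-Verde-Chapter-II}, compute $(f_0^{\gamma})^{\ast}(\nu)=2\{\gamma\}$ via the double cover, and invoke the odd-order hypothesis to make multiplication by $2$ an automorphism. No substantive differences to report.
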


No longer Theorem \ref{Theorem-Bijection} holds for a two-complex $K$ for which $H^2(K;\Z)$ is finite of even order. For instance, if $K=\RP^2$ and $\varrho\in\hom(\Z_2;\Z_2)$ is the identity homomorphism, then $\Phi$ is injective but not  surjective; in fact $H^2(\RP^2;_{\varrho}\!\Z)\approx\Z$ and the image of $\Phi$ is the set of the odd integers; see \cite[Section 3]{Fenille-Daciberg} or \cite[Proposition 2.1]{Daciberg-Spreafico}. The crucial fact is that there is not a map $f_0:\RP^2\to\RP^2$ such that $(f_0)_{\#}=\varrho$ and $f_0^{\ast}=0$.


\section{Actions on strong surjections}\label{Section-Actions-Proof}

\hspace{4mm} Given a two-complex $K$ with fundamental group $\Pi$, we recall that the fundamental group $\pi_1(\RP^2)$ acts on the set $[K;\RP^2]^{\ast}$ of based homotopy classes in such a way that the set $[K;\RP^2]$ of free homotopy classes corresponds to the quotient set of $[K;\RP^2]^{\ast}$ by this action. Moreover, by \cite[Lemma 2.1]{Fenille-Daciberg}, for each $\alpha\in\hom(\Pi;\Z_2)$, the subset $[K;\RP^2]_{\alpha}^{\ast}$ of $[K;\RP^2]^{\ast}$ is invariant by this action. 

The following theorem characterizes, via the effect of the action of $\pi_1(\RP^2)$, the strongly surjective homotopy classes.

\begin{theorem}\label{Theorem-Action}
Let $K$ be a two-complex with fundamental group $\Pi$ and such that $H^2(K;\Z)$ is finite of odd order. A given based map $f:K\to\RP^2$ is strongly surjective if and only if its based homotopy class $[f]^{\ast}$ is not fixed by the action pf $\pi_1(\RP^2)$ on the set $[K;\RP^2]^{\ast}$.
\end{theorem}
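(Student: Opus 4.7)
The plan is to prove the theorem by combining the bijection $\Phi$ of Theorem \ref{Theorem-Bijection} with an analysis of the $\pi_1(\RP^2)$-action and the observation that non-surjective maps inducing $\alpha$ form a single based homotopy class.

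Two preliminary facts drive the argument. (i) Any non-surjective map $h:K\to\RP^2$ inducing $\alpha$ factors through $\RP^2\setminus\{pt\}\simeq S^1$, so $h^\ast(\nu)=0$ (since $\nu$ pulls back to zero on the $1$-skeleton of $\RP^2$), and $[h]^\ast=[f_0]^\ast$ by $\Phi$. (ii) The class $[f_0]^\ast$ is itself fixed by the action: the nontrivial loop $g\in\pi_1(\RP^2)$ can be represented inside $\RP^1\subset\RP^2$, so a free homotopy realizing $g\cdot f_0$ can be confined to $\RP^1$, producing a based map again factoring through $S^1$ with $(g\cdot f_0)^\ast(\nu)=0=f_0^\ast(\nu)$, so the two classes coincide by $\Phi$. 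Together (i) and (ii) give one direction of the theorem: if $f$ is not strongly surjective, some $h$ freely homotopic to $f$ is non-surjective, $[h]^\ast=[f_0]^\ast$ by (i), the free orbit of $[h]^\ast$ is $\{[f_0]^\ast\}$ by (ii), so $[f]^\ast=[f_0]^\ast$ is fixed.

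For the converse I would establish, as an auxiliary lemma, that the $\pi_1(\RP^2)$-action on $[K;\RP^2]_\alpha^\ast$ corresponds under $\Phi$ to multiplication by $-1$ on $H^2(K;_\alpha\!\Z)$; this should follow from the fact that $\pi_1(\RP^2)$ acts on $\pi_2(\RP^2)=\Z$ by $-1$, combined with naturality of the spherical modification construction of Section \ref{Section-Bijections}. Given the lemma, fixedness of $[f]^\ast$ yields $f^\ast(\nu)=-f^\ast(\nu)$, so $2f^\ast(\nu)=0$; by Theorem \ref{Theorem-Odd-Order} the group $H^2(K;_\alpha\!\Z)$ has odd order, which forces $f^\ast(\nu)=0$ and hence $[f]^\ast=[f_0]^\ast$ by $\Phi$, so $f$ is based-homotopic to the non-surjective $f_0$.

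The main obstacle is proving the auxiliary lemma, which requires comparing two \emph{a priori} different operations---the topological action of a loop in $\RP^2$ on based homotopy classes, and the algebraic multiplication by $-1$ on a twisted cohomology group---equivariantly at the level of the cochain model used to set up $\Phi$. A more direct alternative, avoiding the lemma, is to concatenate a based homotopy $f\simeq g\cdot f$ with a free homotopy realizing the action to build a map $F:K\times S^1\to\RP^2$ extending $f$ and sending $\{x_0\}\times S^1$ to a loop representing $g$; a K\"unneth-style computation with local coefficients then gives $f^\ast(\nu)=i^\ast F^\ast(\nu)=0$, because the nontrivial twist on the $S^1$-factor makes $H^0(S^1;_\beta\!\Z)$ vanish.
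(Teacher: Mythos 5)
Your proposal is correct in outline and shares the paper's skeleton: both arguments reduce the theorem to identifying, under $\Phi$, the $\pi_1(\RP^2)$-action on $[K;\RP^2]_{\alpha}^{\ast}$ with multiplication by $-1$ on $H^2(K;_{\alpha}\!\Z)$, and then use the odd order of that group (so $2x=0\Rightarrow x=0$) to conclude that the fixed classes are exactly those with $f^{\ast}(\nu)=0$, i.e.\ the class of the non-surjective $f_0$. The difference lies in how that identification is obtained. The paper gets it almost for free by naturality: it takes a self-map $h_{-1}:\RP^2\to\RP^2$ of twisted degree $-1$, quotes from \cite{Fenille-Daciberg} that the action of $\pi_1(\RP^2)$ on $[\RP^2;\RP^2]_{id}^{\ast}$ exchanges $[id]^{\ast}$ and $[h_{-1}]^{\ast}$, deduces that on $[K;\RP^2]_{\alpha}^{\ast}$ the action sends $[f]^{\ast}$ to $[h_{-1}\circ f]^{\ast}$, and computes $(h_{-1}\circ f)^{\ast}(\nu)=f^{\ast}(h_{-1}^{\ast}\nu)=-f^{\ast}(\nu)$. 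You correctly flag this identification as the main obstacle, but your primary route leaves it at ``should follow from naturality of the spherical modification,'' which as written is a gap. Your fallback, however, is a genuinely different and workable argument that bypasses the lemma entirely: a fixed class yields $F:K\times S^1\to\RP^2$ restricting to $f$ on $K\times\{pt\}$ and to the nontrivial loop on $\{x_0\}\times S^1$, and the Wang exact sequence for the pulled-back local system (monodromy $-1$ around the circle factor) shows that the image of the restriction $H^2(K\times S^1;F^{\ast}\varrho)\to H^2(K;_{\alpha}\!\Z)$ equals the kernel of multiplication by $1-(-1)=2$, which vanishes by odd order, whence $f^{\ast}(\nu)=0$. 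I would phrase that step via the Wang sequence rather than a K\"unneth splitting, since with twisted coefficients one must otherwise justify that restriction to $K\times\{pt\}$ factors through the $H^2(K)\otimes H^0(S^1)$ summand; note that either way the odd-order hypothesis enters at exactly the same point as in the paper. Your forward direction (not strongly surjective implies fixed, via the orbit of $[f_0]^{\ast}$ being a singleton) is sound and in fact supplies more detail than the paper's terse justification of its equivalence (iv)$\Leftrightarrow$(v).
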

\begin{proof}
Let $f:K\to\RP^2$ be a based map and put $\alpha=f_{\#}:\Pi\to\Z_2$. By Theorem \ref{Theorem-Bijection}, the function $[f]^{\ast}\mapsto f^{\ast}(\nu)$ is a bijection from $[K;\RP^2]_{\alpha}^{\ast}$ onto $H^2(K;_{\alpha}\!\Z)$.

Following \cite[Section 3]{Fenille-Daciberg}, let $h_{-1}:\RP^2\to\RP^2$ be a self-map of \emph{twisted degree} $-1$. Define $f_{-1}:K\to\RP^2$ by the composite $f_{-1}=h_{-1}\circ f$. Since $h_{-1}$ induces the identity isomorphism on fundamental groups, we have $[f_{-1}]^{\ast}\in [K;\RP^2]_{\alpha}^{\ast}$. Moreover, since the action of $\pi_1(\RP^2)$ on $[\RP^2;\RP^2]_{id}^{\ast}$ exchanges the homotopy classes $[id]^{\ast}$ and $[h_{-1}]^{\ast}$, it follows that the action of $\pi_1(\RP^2)$ on $[K;\RP^2]_{\alpha}^{\ast}$ exchanges $[f]^{\ast}$ and $[f_{-1}]^{\ast}$. Furthermore, we have
$$f_{-1}^{\ast}(\nu)=f^{\ast}\circ h_{-1}^{\ast}(\nu)=-f^{\ast}(\nu).$$

It follows that the five following assertions are equivalent: (i) $[f]^{\ast}$ is fixed by the action of $\pi_1(\RP^2)$ on the set $[K;\RP^2]^{\ast}$; (ii) $[f_{-1}]^{\ast}=[f]^{\ast}$; (iii) $f_{-1}^{\ast}(\nu)=f^{\ast}(\nu)$; (iv) $f^{\ast}(\nu)=0$; (v) $f$ is not strongly surjective. In fact: (i)$\Leftrightarrow$(ii) follows from the definition of the action; (ii)$\Leftrightarrow$(iii) follows from Theorem \ref{Theorem-Bijection}; (iii)$\Leftrightarrow$(iv) follows from the equality above and the fact that group $H^2(K;_{\alpha}\!\Z)$ has odd order (Theorem \ref{Theorem-Odd-Order}), and so none of its elements has order two; (iv)$\Leftrightarrow$(v) follows from Theorem \ref{Theorem-Bijection} and Theorem \ref{Theorem-Exists-f0}.
\end{proof}

More important than the statement of Theorem \ref{Theorem-Action} is its proof.

\begin{corollary}[of the proof of Theorem \ref{Theorem-Action}]\label{Corollary-Action-Classes}
Let $K$ be a two-complex with fundamental group $\Pi$ and such that $H^2(K;\Z)$ is finite of odd order. For each $\alpha\in\hom(\Pi;\Z_2)$, the action of $\pi_1(\RP^2)$ on $[K;\RP^2]_{\alpha}^{\ast}$ corresponds, under the bijection $\Phi:[K;\RP^2]_{\alpha}^{\ast}\to H^2(K;_{\alpha}\!\Z)$, to the multiplication by $-1$ in $H^2(K;_{\alpha}\!\Z)$.
\end{corollary}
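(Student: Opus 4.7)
The plan is to extract the desired statement directly from the computation carried out inside the proof of Theorem \ref{Theorem-Action}, isolating the two ingredients that were used there and phrasing them as a dictionary between the action and the sign change on $H^2(K;_{\alpha}\!\Z)$.

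First, I would fix an arbitrary $\alpha\in\hom(\Pi;\Z_2)$ and an arbitrary based homotopy class $[f]^{\ast}\in[K;\RP^2]_{\alpha}^{\ast}$, and I would describe the action of the nontrivial element $\omega\in\pi_1(\RP^2)\cong\Z_2$ on $[f]^{\ast}$ via the composition with a self-map $h_{-1}:\RP^2\to\RP^2$ of twisted degree $-1$. As recorded in the proof of Theorem \ref{Theorem-Action}, the action of $\pi_1(\RP^2)$ on $[\RP^2;\RP^2]^{\ast}_{\mathrm{id}}$ exchanges $[\mathrm{id}]^{\ast}$ and $[h_{-1}]^{\ast}$, and therefore the action of $\omega$ on $[K;\RP^2]^{\ast}_{\alpha}$ sends $[f]^{\ast}$ to $[h_{-1}\circ f]^{\ast}=[f_{-1}]^{\ast}$. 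Note that $[f_{-1}]^{\ast}$ does lie in $[K;\RP^2]^{\ast}_{\alpha}$ since $h_{-1}$ induces the identity on $\pi_1$, consistent with the invariance of $[K;\RP^2]^{\ast}_{\alpha}$ under the action.

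Next, I would compute $\Phi([f_{-1}]^{\ast})$ using functoriality of induced homomorphisms on twisted cohomology and the defining property of the twisted degree:
\[
\Phi([f_{-1}]^{\ast})=f_{-1}^{\ast}(\nu)=f^{\ast}\!\left(h_{-1}^{\ast}(\nu)\right)=f^{\ast}(-\nu)=-f^{\ast}(\nu)=-\Phi([f]^{\ast}),
\]
where the second-to-last equality uses that a self-map of $\RP^2$ of twisted degree $-1$ acts as multiplication by $-1$ on the infinite cyclic group $H^2(\RP^2;_{\varrho}\!\Z)$. This establishes that the bijection $\Phi$ intertwines the action of $\omega$ with multiplication by $-1$ on $H^2(K;_{\alpha}\!\Z)$, and since $\pi_1(\RP^2)$ is generated by $\omega$, this completes the correspondence.

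The argument is short because the main obstacle (that the action really transports to multiplication by $-1$, not to some other involution) was already handled during the proof of Theorem \ref{Theorem-Action}, where the odd order of $H^2(K;_{\alpha}\!\Z)$ was used to ensure that multiplication by $-1$ has only one fixed point. No new obstacle arises at the level of the corollary; I expect the only subtlety is to clearly state that the action in question corresponds to post-composition with $h_{-1}$, which is precisely the content of \cite[Lemma 2.1]{Fenille-Daciberg} combined with the description of the action on $[\RP^2;\RP^2]^{\ast}_{\mathrm{id}}$ used in the proof of Theorem \ref{Theorem-Action}.
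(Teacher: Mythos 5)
Your proof is correct and follows essentially the same route as the paper: the corollary is extracted verbatim from the proof of Theorem \ref{Theorem-Action}, namely the identification of the action with post-composition by $h_{-1}$ and the computation $f_{-1}^{\ast}(\nu)=f^{\ast}\circ h_{-1}^{\ast}(\nu)=-f^{\ast}(\nu)$. Your two ingredients are exactly the ones the paper relies on, so nothing further is needed.
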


We conclude the section with the proof of the Main Theorem (Theorem \ref{Theorem-Main}).

\begin{proof}[Proof of the Main Theorem]
Let $K$ be a two-complex with fundamental group $\Pi$ and such that $H^2(K;\Z)$ is finite of odd order. By Theorem \ref{Theorem-Odd-Order}, for each $\alpha\in\hom(\Pi;\Z_2)$, the twisted cohomology group $H^2(K;_{\alpha}\!\Z)$ is finite of odd order.

Item 1 follows from Theorem \ref{Theorem-Bijection}.

Item 2 follows from Corollary \ref{Corollary-Action-Classes} and Item 1.

For item 3, we consider a non-surjective map $f_0:K\to\RP^2$ inducing $\alpha$ on fundamental groups and such that $f_0^{\ast}=0$, as in Theorem \ref{Theorem-Exists-f0}. We claim that each free homotopy class $[f]\neq[f_0]$ is strongly surjective. Given such a class $[f]$, we can suppose, up to homotopy, that the representative map $f:K\to\RP^2$ is based. Consider the map $f_{-1}:K\to\RP^2$ defined as in the proof of Theorem \ref{Theorem-Action}. Then, for a given map $g:K\to\RP^2$, we have $g\in[f]$ if and only if either $g$ is homotopic to $f$ or $g$ is homotopic to $f_{-1}$. By the proof of Theorem \ref{Theorem-Action}, for $g\in[f]$ one has $g^{\ast}(\nu)=\pm f^{\ast}(\nu)\neq0$, which implies that $g$ is strongly surjective.
\end{proof}


\section{On the order of $H^2(K;_{\alpha}\!\Z)$}\label{Section-Appendix}

\hspace{4mm} We have seen that for a two-complex $K$ with fundamental group $\Pi$ and such that $H^2(K;\Z)$ is finite of odd order, for each $\alpha\in\hom(\Pi;\Z_2)$, the twisted cohomology group $H^2(K;_{\alpha}\!\Z)$ is also finite of odd order. Therefore, it is natural to ask which odd integers can be realized as the order of the group $H^2(K;_{\alpha}\!\Z)$, as we vary $K$, and so $\alpha$ in $\hom(\Pi;\Z_2)$, over a certain family of two-complexes. We approach this question for two subfamilies of the family of all two-complexes $K$ with $H^2(K;\Z)=0$ and whose fundamental group has two generators and a single relation (specifically, model two-complexes induced by presentations of the form $\langle x,y\,|\,r \rangle$, sometimes called $(2,1)$-representations). We start with two examples.
  
\begin{example}\label{Example-H2}
{\rm In \cite{Fenille-Daciberg} it was considered the model two-complex $K_k(1)$ of the group presentation $\Pe=\langle x,y\,|\,x^{k+1}yxy \rangle$, for $k\geq1$ odd. One has $H^2(K_k(1);\Z)=0$ and, for the twisted coefficient system $\beta_2$ which just twists the generator $y$, it was computed $H^2(K_k(1);_{\beta_2}\!\Z)\approx\Z/k\Z$. Therefore, if we consider the family of the complexes $K_k$ as $k$ varies over the positive odd integer, then the set of the positive odd integers is realized by this family.}
\end{example}

\begin{example}\label{Example-H3} 
{\rm Besides Example \ref{Example-H2}, it was  mentioned in \cite[Remark 4.3]{Fenille-Daciberg} that for the two-complex $K_k(2)$ obtained by replacing the word $r_1=x^{k+1}yxy$ by $r_2=x^{k+2+l}y^2x^{-l}$, for $l\geq 0$, we have $H^2(K_k(2);_{\beta_2}\!\Z)\approx \Z/(k+2)\Z$ and again $H^2(K_k(2);\Z)=0$. Hence, for a fixed $k\geq1$ odd, the order of $H^2(\,\cdot\,;_{\beta_2}\!\Z)$ has changed despite the fact that the relators $r_1$ and $r_2$ have the property that the sum $\delta_x(r_1)$ of the powers of $x$ in $r_1$ is the same as the sum $\delta_x(r_2)$ of the powers of $x$ in the $r_2$, and similarly  $\delta_y(r_1)=\delta_y(r_2)$, where $\delta_y(r)$ is the sum of the powers of $y$ in $r$.}
\end{example}

Example \ref{Example-H3} suggests the following question: for a fixed integer $k$, which odd integers can be realized as the order of $H^2(K;_{\alpha}\!\Z)$ as $K$ runs over the family of the model two-complexes of presentations of the form $\langle x,y\,|\,r\rangle$, whose relator $r$ has the property $(\delta_x(r),\delta_y(r))=(k+2,2)$? See Proposition  \ref{Proposition-All-Odd-Orders} below.

Next, we prove that it is possible to realize  all positive odd integers as the order of $H^2(K;_{\alpha}\!\Z)$ as $r$ runs over the words in $F(x,y)$ such that $\delta_x(r)$ and $\delta_y(r)$ are any fixed coprime integers greater than $1$, and $\alpha$ is a surjective homomorphism in $\hom(\Pi;\Z_2)$, one for each group $\Pi$. We remark that in order to have $H^2(K;\Z)=0$, the integers $\delta_x(r)$ and $\delta_{y}(r)$ must be coprime (Equation (\ref{Equation-H2}) in Section \ref{Section-Twisted-Cohomology}).

There are just three homomorphisms from $F(x,y)$ into $\Z_2$, other than the trivial one, namely, $\varphi_1,\varphi_2,\varphi_3:F(x,y)\to\Z_2$ given by: $$\varphi_1(x)=-\varphi_2(x)=\varphi_3(x)=-1 \quad{\rm and}\quad -\varphi_1(y)=\varphi_2(y)=\varphi_3(y)=-1.$$

Hence, each homomorphism $\beta:\Pi\to\Z_2$ is the induced homomorphism on the quotient group $\Pi=F(x,y)/N(r)$ by one of $\varphi_1,\varphi_2,\varphi_3$. If it is defined, we denote by $\beta_i$ the homomorphism induced by $\varphi_i$. Of course, $\beta_i$ exists if and only if $\varphi_i(r)=1$.

Therefore, putting $a=\delta_x(r)$ and $b=\delta_y(r)$, we have: $$\hom(\Pi;\Z_2)=\left\{\begin{array}{cl} \{1,\beta_1\} & \textrm{if} \ a \ \textrm{is even and} \ b \ \textrm{is odd} \\ \{1,\beta_2\} & \textrm{if} \ a \ \textrm{is odd and} \ b \ \textrm{is even} \\ \{1,\beta_3\} & \textrm{if} \ a \ \textrm{and} \ b \ \textrm{are odd} \\ \{1,\beta_1,\beta_2,\beta_3\} & \textrm{if} \ a \ \textrm{and} \ b \ \textrm{are even}
 \end{array}\right..$$

The case $a$ and $b$ even will not be considered, since in this case $H^2(K;\Z)\neq0$. Furthermore, by the same reason, we will consider just $a$ and $b$ coprime.

In what follows, given a word $r\in F(x,y)$, we take $K_{r}$ to be the model two-complex of the group presentation $\langle x,y\,|\,r \rangle$.

Given a pair $(a,b)$ of positive integers, we say that $\beta$ is a {\it feasible homomorphism} for $(a,b)$ provided $\beta\in\hom(\Pi;\Z_2)$, according to the previous description of $\hom(\Pi;\Z_2)$.

\begin{proposition}\label{Proposition-All-Odd-Orders}
Let $c\geq 1$ be odd and $a,b\geq2$ coprime. If $\beta$ is a feasible homomorphism for the pair $(a,b)$, then there exists a word $r\in F(x,y)$ such that $\delta_x(r)=a$, $\delta_y(r)=b$ $($and so $H^2(K;\Z)=0)$ and $H^2(K_r;_{\beta}\!\Z)\approx\Z/c\Z$.
\end{proposition}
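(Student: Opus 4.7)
Since the presentation $\langle x,y\mid r\rangle$ has a single relator, the matrix $\Delta^{\beta}$ of Section \ref{Section-Twisted-Cohomology} is a single row with two integer entries $d_{x}:=\|\partial r/\partial x\|_{\beta}$ and $d_{y}:=\|\partial r/\partial y\|_{\beta}$, so by Equation (\ref{Equation-H2}) the task reduces to arranging $\gcd(d_{x},d_{y})=c$. I will use the single candidate
$$r\;=\;x^{a}y^{b}\,[x,y]^{k},\qquad [x,y]:=xyx^{-1}y^{-1},$$
with $k\in\Z$ to be chosen depending on $\beta$ and $c$. The exponent sums automatically give $\delta_{x}(r)=a$ and $\delta_{y}(r)=b$, so feasibility of $\beta$ amounts to $\beta(x)^{a}\beta(y)^{b}=1$, and under the trivial coefficient system $(d_{x},d_{y})=(a,b)$ forces $H^{2}(K_{r};\Z)\approx\Z/\gcd(a,b)\Z=0$, as required.

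The core computation is a direct application of the Fox derivative rules, giving
$$\partial[x,y]/\partial x=1-xyx^{-1},\qquad \partial[x,y]/\partial y=x-xyx^{-1}y^{-1}.$$
Since every $\beta\in\hom(\Pi;\Z_{2})$ sends commutators to $1$, i.e.\ $\|[x,y]\|_{\beta}=1$, the product rule $\partial u^{k}/\partial z=(1+u+\cdots+u^{k-1})\,\partial u/\partial z$ specializes to $\|\partial[x,y]^{k}/\partial z\|_{\beta}=k\,\|\partial[x,y]/\partial z\|_{\beta}$ for $z\in\{x,y\}$. Combining this with the Leibniz rule applied to the whole word $r$ then produces expressions for $d_{x}$ and $d_{y}$ that are affine in $k$.

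The final step is a three-case evaluation, using the values of $\beta_{i}(x)$ and $\beta_{i}(y)$ listed just before the proposition. Routine bookkeeping yields
$$(d_{x},d_{y})=\begin{cases}(0,\;b-2k) & \beta=\beta_{1}\;(a\text{ even},\;b\text{ odd}),\\ (a+2k,\;0) & \beta=\beta_{2}\;(a\text{ odd},\;b\text{ even}),\\ (1+2k,\;-(1+2k)) & \beta=\beta_{3}\;(a,b\text{ both odd}).\end{cases}$$
In each regime $\gcd(d_{x},d_{y})$ is a nonnegative odd integer moving in steps of $2$ as $k$ ranges over $\Z$, so any odd $c\geq 1$ is attained by taking $k=(b-c)/2$, $k=(c-a)/2$, or $k=(c-1)/2$ respectively; each is a genuine integer thanks to the parity hypotheses on $a$, $b$, and $c$.

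The main obstacle is verifying that the \emph{single} template $r=x^{a}y^{b}[x,y]^{k}$ is flexible enough to cover all three feasibility scenarios. The degenerate cases $\beta_{1}$ and $\beta_{2}$ kill one entry of the Jacobian, while $\beta_{3}$ produces a proportional pair with opposite signs; nevertheless the shift furnished by $[x,y]^{k}$ always meshes with the parity of the ``front'' contribution from $x^{a}y^{b}$ under each $\beta$, which is precisely why every positive odd value of $c$ is attainable. Beyond this parity bookkeeping I do not expect any serious technical difficulty, and Proposition \ref{Proposition-All-Odd-Orders} will follow.
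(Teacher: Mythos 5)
Your proof is correct: I checked the Fox--derivative computations, and for the word $r=x^{a}y^{b}[x,y]^{k}$ one indeed gets $\bigl(\|\partial r/\partial x\|_{\beta},\|\partial r/\partial y\|_{\beta}\bigr)$ equal to $(0,b-2k)$, $(a+2k,0)$ and $(1+2k,-(1+2k))$ in the three feasible cases, so the cokernel of the $1\times 2$ matrix is $\Z/c\Z$ for your choices of $k$, and the trivial system gives $\Z/\gcd(a,b)\Z=0$. The underlying mechanism is the same as in the paper --- realize the order as the gcd of the two twisted augmentations of the Fox derivatives, and shift one of them by $\pm2$ per appended copy of a word with zero exponent sums --- but your execution differs: the paper argues case by case and by induction, starting from hand-picked base words such as $x^{p+2}yx^{p+1}y^{2q+1}$ and appending copies of $y^{-1}x^{-1}y^{-1}xy^{2}$, whereas you use a single closed-form template $x^{a}y^{b}[x,y]^{k}$ valid in all three cases with an explicit formula for $k$. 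Your version is more uniform and avoids the induction; the paper's version avoids any discussion of negative powers of the appended word. On that last point, note one small thing: your values of $k$ can be negative (e.g.\ $k=(c-a)/2$ when $c<a$), and the identity $\partial u^{k}/\partial z=(1+u+\cdots+u^{k-1})\,\partial u/\partial z$ as written only makes sense for $k\geq1$; for $k<0$ one has instead $\partial u^{k}/\partial z=-(u^{-1}+\cdots+u^{k})\,\partial u/\partial z$, which still yields $\|\partial u^{k}/\partial z\|_{\beta}=k\,\|\partial u/\partial z\|_{\beta}$ because $\|u\|_{\beta}=1$ --- so the conclusion stands, but the intermediate formula should be stated for all $k\in\Z$ (or one can simply aim for $-c$ instead of $c$ to keep $k\geq0$).
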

\begin{proof}
The proof is divided in three cases, according to the parity of $a$ and $b$.

\vspace{2mm}
{\bf Case 1.} The pair $(a,b)=(odd,even)$, we say $a=2p+3$ and $b=2q+2$. In this case, $\beta_2$ is the unique non-trivial feasible homomorphism for $(a,b)$. We proceed by induction.

Consider the word $r_1=x^{p+2}yx^{p+1}y^{2q+1}$, for which we have $\delta_x(r_1)=a$ and $\delta_y(r_1)=b$. By straight computation, we obtain:
\begin{align*}
\frac{\partial r_1}{\partial x}= & \, (1+x+\cdots x^{p+1})+x^{p+2}y(1+x+\cdots x^{p}) \ \ {\rm and \ so} \ \ \Vert\frac{\partial r_1}{\partial x}\Vert_{\beta_2}=1; \\ \noalign{\smallskip} \frac{\partial r_1}{\partial y}= & \, x^{p+2}+x^{p+2}yx^{p+1}(1+y+\cdots y^{2q}) \ \ {\rm and \ so} \ \ \Vert\frac{\partial r_1}{\partial y}\Vert_{\beta_2}=0.
\end{align*}

It follows that $H^2(K_{r_1};_{\beta_2}\!\Z)=\Z/1\Z=0$ and we have the result for $c=1$.

Suppose, as induction hypotheses, that for a given odd integer $c\geq 1$, there exists $r\in F(x,y)$ such that $\delta_x(r)=a$, $\delta_y(r)=b$ and $H^2(K_{r};_{\beta_2}\!\Z)\approx\Z/c\Z$.

Consider the word $s=r y^{-1}x^{-1}y^{-1}xy^2$. Then $\delta_x(s)=a$, $\delta_y(s)=b$ and we have
\begin{align*}
\frac{\partial s}{\partial x} = & \frac{\partial r}{\partial x} + r y^{-1}(-x^{-1}+x^{-1}y^{-1}) \ {\rm and \ so} \ \Vert\frac{\partial s}{\partial x}\Vert_{\beta}=\Vert\frac{\partial r}{\partial x}\Vert_{\beta}+2; \\ \noalign{\smallskip} \frac{\partial s}{\partial y} = & \frac{\partial r}{\partial y} + r\big(\!-y^{-1}\!+\!\big(y^{-1}x^{-1})(-y^{-1}\!+\!y^{-1}x(1+y)\big)\big) \ {\rm and \ so} \ \Vert\frac{\partial s}{\partial y}\Vert_{\beta}= \Vert\frac{\partial r}{\partial y}\Vert_{\beta}.
\end{align*}

By induction, since it happens with the word $r_1$, we can suppose $\Vert \partial r/\partial y\Vert_{\beta}=0$. Then, by the induction hypothesis, $\Vert \partial r/\partial x\Vert_{\beta}=c$. It follows that $H^2(K_s;_{\beta}\!\Z)=\Z/(c+2)\Z$ and the result follows.

\vspace{2mm}
{\bf Case 2.} The pair $(a,b)=(even,odd)$. In this case, $\beta_1$ is the unique non-trivial feasible homomorphism for $(a,b)$. The proof can be obtained from that of Case 1 by permuting the letters $x$ and $y$.

\vspace{2mm}
{\bf Case 3.} The pair $(a,b)=(odd,odd)$ we say, $a=2p+3$ and $b=2q+3$. In this case, $\beta_3$ is the unique non-trivial feasible homomorphism for $(a,b)$.

Consider the word $r_0=x^{p+2}y^2x^{p+1}y^{2q+1}$, for which we have $\delta_x(r_0)=a$ and $\delta_y(r_0)=b$. By straight computation, we obtain 
$$ \Vert\frac{\partial r_0}{\partial x}\Vert_{\beta_3}=1 \quad {\rm and} \quad \Vert\frac{\partial r_0}{\partial y}\Vert_{\beta_3}=-1.
$$

It follows that $H^2(K_{r_0};_{\beta_3}\!\Z)=\Z/1\Z=0$ and we have the result for $c=1$.

Consider the word $r_1=r_0y^{-1}x^{-1}y^{-1}xy^2$. Then $\delta_x(r_1)=a$, $\delta_y(r_1)=b$ and we have:
$$ \Vert\frac{\partial r_1}{\partial x}\Vert_{\beta_3}=\Vert\frac{\partial r_0}{\partial x}\Vert_{\beta_3}-2=-1 \quad {\rm and} \quad \Vert\frac{\partial r_1}{\partial y}\Vert_{\beta_3}= \Vert\frac{\partial r_0}{\partial y}\Vert_{\beta_3} +2=1.$$

Again $H^2(K_{r_1};_{\beta_3}\!\Z)=\Z/1\Z=0$ and we remain with the result for $c=1$.

Taking it one step further, the order increases.

Take $r_2=r_0(y^{-1}x^{-1}y^{-1}xy^2)^2$. Then $\delta_x(r_2)=a$, $\delta_y(r_2)=b$ and we have:
$$ \Vert\frac{\partial r_2}{\partial x}\Vert_{\beta_3}=\Vert\frac{\partial r_1}{\partial x}\Vert_{\beta_3}-2=-3 \quad {\rm and} \quad \Vert\frac{\partial r_2}{\partial y}\Vert_{\beta_3}= \Vert\frac{\partial r_1}{\partial y}\Vert_{\beta_3} +2=3.$$

Inductively, for a given positive integer $n\geq1$, the word  $r_{n}=r_0(y^{-1}x^{-1}y^{-1}xy^2)^n$ is such that $\delta_x(r_n)=a$, $\delta_y(r_n)=b$ and $$ \Vert\frac{\partial r_n}{\partial x}\Vert_{\beta_3}=-2n+1 \quad {\rm and} \quad \Vert\frac{\partial r_n}{\partial y}\Vert_{\beta_3}= 2n-1.$$ 

It follows that $H^2(K_{r_n};_{\beta_3}\!\Z)=\Z/(2n-1)\Z$ and the result follows.
\end{proof}


\section*{Acknowledgments}

\hspace{4mm} The second and third authors are partially sponsored by Projeto Tem\'atico FAPESP, grant 2016/24707-4: {\it Topologia Alg\'ebrica, Geom\'etrica e Diferencial}. The first author thanks the same project for the support during a visit to the University of S\~ao Paulo.




\rule{4.5cm} {1pt} \vspace{2mm}

 {\sc Marcio Colombo Fenille} ({\sl mcfenille@gmail.com})
 
 Universidade Federal de Uberl\^andia -- Faculdade de Matem\'atica.
 
 Av.\,Jo\~ao Naves de \'Avila, 2121, Santa M\^onica, 38400-902, Uberl\^andia MG, Brasil.
 
 \vspace{4mm}
 
 {\sc Daciberg Lima Gon\c calves} ({\sl dlgoncal@ime.usp.br}) 
 
 Universidade de S\~ao Paulo -- Instituto de Matem\'atica e Estat\'istica.
 
  Rua do Mat\~ao, 1010, Cidade Universit\'aria, 05508-090, S\~ao Paulo SP, Brasil.
 
 \vspace{4mm}
 
 {\sc Oziride Manzoli Neto} ({\sl ozimneto@icmc.usp.br})
  
 Universidade de S\~ao Paulo -- Instituto de Ci\^encias Matem\'aticas e de Computa\c c\~ao.
 
 Av.\,Trabalhador S\~ao-Carlense, 400, Centro, 13566-590, S\~ao Carlos SP, Brasil.

\end{document}